\newtheorem{theorem}{Theorem}[section]
\newtheorem{definition}[theorem]{Definition}
\newtheorem{lemma}[theorem]{Lemma}
\newtheorem{remark}[theorem]{Remark}
\newcommand{\rd}{\, \mathrm{d}}
\newcommand{\bsc}{\boldsymbol{c}}
\newcommand{\bsh}{\boldsymbol{h}}
\newcommand{\bsx}{\boldsymbol{x}}
\newcommand{\bsy}{\boldsymbol{y}}
\newcommand{\bsz}{\boldsymbol{z}}
\newcommand{\bsgamma}{\boldsymbol{\gamma}}
\newcommand{\bszero}{\boldsymbol{0}}
\newcommand{\EE}{\mathbb{E}}
\newcommand{\RR}{\mathbb{R}}
\newcommand{\NN}{\mathbb{N}}
\newcommand{\ZZ}{\mathbb{Z}}
\newcommand{\tr}{\mathrm{tr}}
\newcommand{\wor}{\mathrm{wor}}
\title{A note on concatenation of quasi-Monte Carlo and plain Monte Carlo rules in high dimensions\thanks{\textbf{Keywords:} quasi-Monte Carlo, plain Monte Carlo, high-dimensional integration, weighted function space, rank-1 lattice rule}}
\author{Takashi Goda\thanks{School of Engineering, University of Tokyo, 7-3-1 Hongo, Bunkyo-ku, Tokyo 113-8656, Japan. ({\tt goda@frcer.t.u-tokyo.ac.jp})}}
\date{\today}
\begin{document}

\maketitle
\begin{abstract}
    In this note, we study a concatenation of quasi-Monte Carlo and plain Monte Carlo rules for high-dimensional numerical integration in weighted function spaces. In particular, we consider approximating the integral of periodic functions defined over the $s$-dimensional unit cube by using rank-1 lattice point sets only for the first $d\, (<s)$ coordinates and random points for the remaining $s-d$ coordinates. We prove that, by exploiting a decay of the weights of function spaces, almost the optimal order of the mean squared worst-case error is achieved by such a concatenated quadrature rule as long as $d$ scales at most linearly with the number of points. This result might be useful for numerical integration in extremely high dimensions, such as partial differential equations with random coefficients for which even the standard fast component-by-component algorithm is considered computationally expensive.
\end{abstract}

\section{Introduction}
We study numerical integration of functions defined over the $s$-dimensional unit cube with large $s$. For an integrable function $f: [0,1)^s\to \RR$, we write
\[ I(f):= \int_{[0,1)^s}f(\bsx)\rd \bsx. \]
For a point set $P\subset [0,1)^s$, the integral $I(f)$ is approximated by an equally-weighted quadrature rule
\[ I(f;P) := \frac{1}{|P|}\sum_{\bsx\in P}f(\bsx). \]
We call $I(f;P)$ a quasi-Monte Carlo (QMC) rule when the point set $P$ is chosen in a deterministic manner, while we call $I(f;P)$ a plain Monte Carlo (MC) rule when each point $\bsx\in P$ is generated independently from the uniform distribution over the domain $[0,1)^s$. In this paper we consider an intermediate rule between QMC and MC rules, in the sense that the projection of $P$ onto the first $d$ coordinates with some $d<s$ is given by a deterministic QMC point set, whereas the projection onto the remaining $s-d$ coordinates is simply a set of random points. That is, we \emph{concatenate} QMC and MC rules.

Our motivation behind introducing such a concatenated rule mainly comes from applications to extremely high-dimensional problems such as partial differential equations with random coefficients \cite{KSS12,KN16,Ka19,EKN20}. For such applications, an infinite Karhunen-Lo\`{e}ve expansion needs to be truncated up to some finite terms to perform computations, whose number is the problem dimension $s$. Therefore, $s$ can be very large in order to keep the truncation error small. Even so, the notion of weighted function spaces from \cite{SW98} can be successfully applied, often resulting in a dimension-independent error bound supported by the (fast) component-by-component (CBC) construction of QMC point sets \cite{SR02,Kuo03,NC06,DG21}.

Typically, for the case of product weights, the fast CBC construction of rank-1 lattice point sets, a special class of QMC point sets, requires $O(sN\log N)$ arithmetic operations with $O(N)$ memory \cite{NC06}, where $N$ denotes the number of points. When $s$ is as large as or even much larger than $N$, we may need some techniques to reduce the construction cost from the standard fast CBC algorithm. Some relevant ideas in this direction have been introduced in \cite{DKLP15,DK16}. For instance, in \cite{DKLP15}, the authors assume a fast decay of the weight parameters and introduce a fast CBC algorithm which reduces the size of the search space for coordinates with less relative importance. The search space finally contains only one possible choice after some coordinates, so that we do not need to search any more. This idea has been further explored in \cite{KPW16} as a truncation algorithm, in which the latter coordinates are fixed at an anchor $c\in [0,1)$. In this paper, instead of reducing the size of the search space or fixing at an anchor, we apply the standard fast CBC algorithm up to the first $d$ coordinates, and then the remaining $s-d$ coordinates with less relative importance are filled by random points. Hence the necessary construction cost is of $O(dN\log N)$, which may lead to a substantial cost saving if $d$ can be much smaller than $s$. 

Here we point out that the idea of concatenating quasi-Monte Carlo and plain Monte Carlo rules goes back to the work of Spanier \cite{Sp95} and then has been investigated in terms of discrepancy of concatenated, or mixed/hybrid, point sets and sequences, see \cite{Ok96,Gn09,Ni09,AH12} among others. In \cite{Ow94,Ow98}, Owen studied the variance of concatenated rules between quasi-Monte Carlo and Latin hypercube samplings. We also refer to \cite[Section~4.5]{WGH20} for some results on concatenating two randomized sequences.

The rest of this note is organized as follows. In the next section, we introduce the function space of our interest, namely, the weighted Korobov space consisting of smooth periodic functions over $[0,1)^s$. In Section~\ref{sec:rule}, we describe our concatenated rule made up of a rank-1 lattice point set and random points. As the main result of this note, we show in Section~\ref{sec:error} that almost the optimal order of the mean squared worst-case error is achieved if $d$ scales at most linearly with the number of points. We conclude this paper with comparing the result for our concatenated rule with the results for the reduced CBC algorithm and for a truncation algorithm, respectively.

\section{Weighted Korobov spaces}\label{sec:space}
Let $f: [0,1)^s\to \RR$ be periodic and given by its absolutely convergent Fourier series
\[ f(\bsx) = \sum_{\bsh\in \ZZ^s}\hat{f}(\bsh)\exp\left( 2\pi i \bsh\cdot \bsx\right),\]
where the dot product $\cdot$ denotes the usual inner product of two vectors on the Euclidean space $\RR^s$ and $\hat{f}(\bsh)$ denotes the $\bsh$-th Fourier coefficient of $f$:
\[ \hat{f}(\bsh) := \int_{[0,1)^s}f(\bsx)\exp\left( -2\pi i \bsh\cdot \bsx\right)\rd \bsx. \]
We measure the smoothness of periodic functions by a parameter $\alpha>1/2$. A non-increasing sequence of \emph{weights} $\gamma_1\geq \gamma_2\geq \cdots>0$ plays a role in moderating the relative importance of different variables and we write $\bsgamma=(\gamma_1,\gamma_2,\ldots)\in \RR_{>0}^{\NN}$. For a vector $\bsh\in \ZZ^s$, we define
\[ r_{\alpha,\bsgamma}(\bsh):=\prod_{\substack{j=1\\ h_j\neq 0}}^{s}\frac{\gamma_j}{|h_j|^{2\alpha}},\]
where the empty product is set to 1.
Then the weighted Korobov space, denoted by $H_{s,\alpha,\bsgamma}$, is a reproducing kernel Hilbert space with the reproducing kernel
\[ K_{s,\alpha,\bsgamma}(\bsx,\bsy) = 1+\sum_{\bsh\in \ZZ^s\setminus \{\bszero\}}r_{\alpha,\bsgamma}(\bsh)\exp\left( 2\pi i \bsh\cdot (\bsx-\bsy)\right). \]
and the inner product
\[ \langle f,g\rangle_{s,\alpha,\bsgamma} = \hat{f}(\bszero)\overline{\hat{g}(\bszero)}+\sum_{\bsh\in \ZZ^s\setminus \{\bszero\}}\frac{\hat{f}(\bsh)\overline{\hat{g}(\bsh)}}{r_{\alpha,\bsgamma}(\bsh)}.\]
The parameter $\alpha$ not only moderates the decay of the Fourier coefficients, but also coincides precisely with the number of available square-integrable partial mixed derivatives in each variable when it is an integer \cite[Section~5.8]{DKS13}.

The worst-case error of an equally-weighted quadrature rule with a fixed point set $P$ for the weighted Korobov space is defined by
\[ e^{\wor}(H_{s,\alpha,\bsgamma},P):= \sup_{\substack{f\in H_{s,\alpha,\bsgamma}\\ \|f\|_{s,\alpha,\bsgamma}\leq 1}}\left| I(f;P)-I(f)\right|.\]
It is well-known (see, for instance, \cite[Theorem~3.5]{DKS13}) that we have
\begin{align}
    (e^{\wor}(H_{s,\alpha,\bsgamma},P))^2 & = \int_{[0,1)^s}\int_{[0,1)^s}K_{s,\alpha,\bsgamma}(\bsx,\bsy)\rd \bsx \rd \bsy \notag \\
    & \quad - \frac{2}{|P|}\sum_{\bsx\in P}\int_{[0,1)^s}K_{s,\alpha,\bsgamma}(\bsx,\bsy)\rd \bsy+\frac{1}{|P|^2}\sum_{\bsx,\bsy\in P}K_{s,\alpha,\bsgamma}(\bsx,\bsy) \notag \\
    & = -1+\frac{1}{|P|^2}\sum_{\bsx,\bsy\in P}K_{s,\alpha,\bsgamma}(\bsx,\bsy), \label{eq:worst-case_error}
\end{align}
where the second equality comes from the equality
\[ \int_{[0,1)^s}K_{s,\alpha,\bsgamma}(\bsx,\bsy)\rd \bsy=1,\]
which holds for any $\bsx\in [0,1)^s$. As a reference value, the initial error is defined and given by
\[ e^{\wor}(H_{s,\alpha,\bsgamma},0):= \sup_{\substack{f\in H_{s,\alpha,\bsgamma}\\ \|f\|_{s,\alpha,\bsgamma}\leq 1}}\left| I(f)\right| = \left( \int_{[0,1)^s}\int_{[0,1)^s}K_{s,\alpha,\bsgamma}(\bsx,\bsy)\rd \bsx \rd \bsy \right)^{1/2}=1.\]

\section{Concatenation of QMC and MC rules}\label{sec:rule}
As stated in the introduction, we consider using a rank-1 lattice point set for the first $d$ coordinates with some $0<d<s$ and random points for the remaining $s-d$ coordinates. First, rank-1 lattice point sets are defined as follows.
\begin{definition}[Rank-1 lattice point sets]
Let $N,d$ be positive integers and $\bsz\in \{1,\ldots,N-1\}^d$. The rank-1 lattice point set determined by $N,d$ and $\bsz$ is given by
\[ P_{N,d,\bsz} = \left\{ \left( \left\{ \frac{nz_1}{N}\right\},\ldots,\left\{ \frac{nz_d}{N}\right\}\right) \mid 0\leq n\leq N-1\right\} \subset [0,1)^d, \]
where $\{x\}:=x-\lfloor x\rfloor$ denotes the fractional part of a real number $x$. The vector $\bsz$ is called a generating vector.
\end{definition}

Evidently we need to construct a good generating vector $\bsz$. The standard (fast) CBC algorithm is a greedy algorithm which successively searches for one component $z_j$ from the set $\{1,\ldots,N-1\}$ at a time while keeping previous components $z_1,\ldots,z_{j-1}$ unchanged. To be precise, the CBC algorithm using the squared worst-case error as a criterion proceeds as follows:
\begin{enumerate}
    \item Set $z_j=1$.
    \item For $j=2,3,\ldots,d$, choose 
    \[ z_{j}=\arg\min_{z\in \{1,\ldots,N-1\}}(e^{\wor}(H_{j,\alpha,\bsgamma},P_{N,j,(z_1,\ldots,z_{j-1},z)}))^2.\]
\end{enumerate}
\noindent As already pointed out, error bounds resulting from the CBC algorithm have been well-established in the literature, such as \cite{SR02,Kuo03,NC06,DG21} among many others. 

The concept of the dual lattice and its associated character property, described below, play a central role in our error analysis.
\begin{definition}[Dual lattice]
Let $N,d$ be positive integers and $\bsz\in \{1,\ldots,N-1\}^d$. The dual lattice of the rank-1 lattice point set $P_{N,d,\bsz}$ is defined by
\[ P^{\perp}_{N,d,\bsz} = \left\{ \bsh\in \ZZ^d \mid \bsh\cdot \bsz \equiv 0 \pmod N\right\} \subset \ZZ^d. \]
\end{definition}
\begin{lemma}[Character property]\label{lem:character}
Let $N,d$ be positive integers and $\bsz\in \{1,\ldots,N-1\}^d$. With the notation above, it holds for $\bsh\in \ZZ^d$ that
\[ \frac{1}{N}\sum_{\bsx\in P_{N,d,\bsz}}\exp(2\pi i \bsh\cdot \bsx)=\begin{cases} 1 & \text{if $\bsh\in P^{\perp}_{N,d,\bsz}$,} \\ 0 & \text{otherwise.}\end{cases} \]
\end{lemma}

Now our concatenated point set is given as follows:
\[ P_{N,s,d,\bsz} = \left\{ \left( \left\{ \frac{nz_1}{N}\right\},\ldots,\left\{ \frac{nz_d}{N}\right\},x_{n,d+1},\ldots,x_{n,s}\right) \mid 0\leq n\leq N-1\right\}\subset [0,1)^s,\]
where $x_{n,j},\, n=0,\ldots,N-1,\, j=d+1,\ldots,s$ are sampled independently from the uniform distribution on $[0,1)$.

\section{An error bound}\label{sec:error}
In what follows, $\EE$ denotes the expected value with respect to the probability measure, under which the random variables $x_{n,j},\, n=0,\ldots,N-1,\, j=d+1,\ldots,s$ are independently uniformly distributed on $[0,1)$. Moreover, for two integers $a,b$ with $a\leq b$, we write $a:b=\{a,a+1,\ldots,b\}$. First we prove the following lemma.
\begin{lemma}\label{lem:ms_worst-case_error}
Let $N,s,d$ be positive integers with $d<s$ and $\alpha>1/2$ be a real number. The mean squared worst-case error of an equally-weighted quadrature rule using a point set $P_{N,s,d,\bsz}$ for the weighted Korobov space $H_{s,\alpha,\bsgamma}$ is given by
\begin{align}\label{eq:ms_worst-case_error} \EE\left[(e^{\wor}(H_{s,\alpha,\bsgamma},P_{N,s,d,\bsz}))^2\right] = \frac{1}{N}\sum_{\substack{\bsh\in \ZZ^s\setminus \{\bszero\}\\ \bsh_{d+1:s} \neq\bszero}}r_{\alpha,\bsgamma}(\bsh)+\sum_{\bsh_{1:d}\in P^{\perp}_{N,d,\bsz}\setminus \{\bszero\}}r_{\alpha,\bsgamma}(\bsh_{1:d},\bszero).
\end{align}
\end{lemma}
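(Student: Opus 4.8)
The plan is to begin from the closed form \eqref{eq:worst-case_error} for the squared worst-case error. Writing $\bsx_0,\ldots,\bsx_{N-1}$ for the points of $P_{N,s,d,\bsz}$ (so that $|P_{N,s,d,\bsz}|=N$) and $\bsx_n^{R}:=(x_{n,d+1},\ldots,x_{n,s})$ for the random tail of $\bsx_n$, one has
\[ \left(e^{\wor}(H_{s,\alpha,\bsgamma},P_{N,s,d,\bsz})\right)^2 = -1 + \frac{1}{N^2}\sum_{n,m=0}^{N-1} K_{s,\alpha,\bsgamma}(\bsx_n,\bsx_m). \]
I would substitute the Fourier expansion of the reproducing kernel and, since $\exp(2\pi i h_j\{nz_j/N\})=\exp(2\pi i h_j nz_j/N)$ for $h_j\in\ZZ$, factor each character as
\[ \exp\!\left(2\pi i\,\bsh\cdot(\bsx_n-\bsx_m)\right) = \exp\!\left(\frac{2\pi i (n-m)\,\bsh_{1:d}\cdot\bsz}{N}\right)\exp\!\left(2\pi i\,\bsh_{d+1:s}\cdot(\bsx_n^{R}-\bsx_m^{R})\right). \]
The constant term of the kernel contributes $\frac{1}{N^2}\sum_{n,m}1=1$, cancelling the $-1$, and interchanging the (absolutely convergent) Fourier sum with the expectation leaves the expected value of the random factor above as the only quantity to evaluate.

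For $n=m$ the random factor is identically $1$. For $n\neq m$ the vectors $\bsx_n^{R}$ and $\bsx_m^{R}$ are independent and each has independent uniform coordinates, so $\EE\!\left[\exp(2\pi i\,\bsh_{d+1:s}\cdot\bsx_n^{R})\right]=\prod_{j=d+1}^{s}\int_0^1 \exp(2\pi i h_j t)\rd t$, which equals $1$ if $\bsh_{d+1:s}=\bszero$ and $0$ otherwise; hence for $n\neq m$ the expected random factor equals $1$ if $\bsh_{d+1:s}=\bszero$ and $0$ if $\bsh_{d+1:s}\neq\bszero$.

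It then remains to split the sum over $\bsh\in\ZZ^s\setminus\{\bszero\}$ according to whether $\bsh_{d+1:s}$ vanishes. If $\bsh_{d+1:s}\neq\bszero$, only the $N$ diagonal pairs $n=m$ survive and each contributes $r_{\alpha,\bsgamma}(\bsh)$, which after the $1/N^2$ prefactor yields the first term $\frac1N\sum_{\bsh\in\ZZ^s\setminus\{\bszero\},\ \bsh_{d+1:s}\neq\bszero}r_{\alpha,\bsgamma}(\bsh)$ of \eqref{eq:ms_worst-case_error}. If $\bsh_{d+1:s}=\bszero$, i.e.\ $\bsh=(\bsh_{1:d},\bszero)$ with $\bsh_{1:d}\neq\bszero$, the expected random factor equals $1$ for all $n,m$, so the contribution is $r_{\alpha,\bsgamma}(\bsh_{1:d},\bszero)$ times $\frac{1}{N^2}\sum_{n,m=0}^{N-1}\exp(2\pi i(n-m)\bsh_{1:d}\cdot\bsz/N)$, which equals the squared modulus of $\frac1N\sum_{n=0}^{N-1}\exp(2\pi i n\bsh_{1:d}\cdot\bsz/N)$ and, by the character property (Lemma~\ref{lem:character}), is $1$ if $\bsh_{1:d}\in P^{\perp}_{N,d,\bsz}$ and $0$ otherwise; summing over such $\bsh$ gives the second term $\sum_{\bsh_{1:d}\in P^{\perp}_{N,d,\bsz}\setminus\{\bszero\}}r_{\alpha,\bsgamma}(\bsh_{1:d},\bszero)$. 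Adding the two contributions gives \eqref{eq:ms_worst-case_error}.

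I do not expect a genuine obstacle: the computation is elementary once the expectation of the random character is understood. The only points needing care are the bookkeeping of the case split --- in particular, observing that when $\bsh_{d+1:s}=\bszero$ the off-diagonal pairs are \emph{not} annihilated, so the full double sum over the lattice coordinates must be evaluated via the character property rather than just its diagonal --- together with the elementary identity $r_{\alpha,\bsgamma}(\bsh)=r_{\alpha,\bsgamma}(\bsh_{1:d},\bszero)$ whenever the tail $\bsh_{d+1:s}$ vanishes, and the routine justification of exchanging expectation and the absolutely convergent Fourier sum.
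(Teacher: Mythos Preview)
Your proposal is correct and follows essentially the same approach as the paper: both start from \eqref{eq:worst-case_error}, expand the kernel, separate diagonal from off-diagonal pairs to evaluate the expectation over the random tails, and then invoke the character property (Lemma~\ref{lem:character}) on the lattice part. The only difference is organizational---the paper first splits diagonal/off-diagonal and later splits according to whether $\bsh_{d+1:s}=\bszero$, while you effectively weave the two splits together---but the computation is the same.
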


\begin{proof}
Using \eqref{eq:worst-case_error}, the linearity of expectation and Lemma~\ref{lem:character}, we have
\begin{align*}
    & \EE\left[(e^{\wor}(H_{s,\alpha,\bsgamma},P_{N,s,d,\bsz}))^2\right] \\
    & = \EE\left[-1+\frac{1}{N^2}\sum_{\bsx,\bsy\in P_{N,s,d,\bsz}}K_{s,\alpha,\bsgamma}(\bsx,\bsy)\right] \\
    & = \EE\left[\frac{1}{N^2}\sum_{\bsx,\bsy\in P_{N,s,d,\bsz}}\sum_{\bsh\in \ZZ^s\setminus \{\bszero\}}r_{\alpha,\bsgamma}(\bsh)\exp\left( 2\pi i \bsh\cdot (\bsx-\bsy)\right)\right] \\
    & = \sum_{\bsh\in \ZZ^s\setminus \{\bszero\}}r_{\alpha,\bsgamma}(\bsh)\cdot \EE\left[\frac{1}{N^2}\sum_{\bsx,\bsy\in P_{N,s,d,\bsz}}\exp\left( 2\pi i \bsh\cdot (\bsx-\bsy)\right)\right] \\
    & = \sum_{\bsh\in \ZZ^s\setminus \{\bszero\}}r_{\alpha,\bsgamma}(\bsh) \cdot \EE\left[\frac{1}{N^2}\sum_{\bsx\in P_{N,s,d,\bsz}}\exp\left( 2\pi i \bsh\cdot \bszero \right)+\frac{1}{N^2}\sum_{\substack{\bsx,\bsy\in P_{N,s,d,\bsz}\\ \bsx\neq \bsy}}\exp\left( 2\pi i \bsh\cdot (\bsx-\bsy)\right)\right] \\
    & = \sum_{\bsh\in \ZZ^s\setminus \{\bszero\}}r_{\alpha,\bsgamma}(\bsh) \cdot \left[\frac{1}{N} +\frac{1}{N^2}\sum_{\substack{\bsx,\bsy\in P_{N,d,\bsz}\\ \bsx\neq \bsy}}\exp\left( 2\pi i \bsh_{1:d}\cdot (\bsx-\bsy)\right)\prod_{j=d+1}^{s}\int_0^1\int_0^1 \exp\left( 2\pi i h_j\cdot (x_j-y_j)\right)\rd x_j\rd y_j \right] \\
    & = \sum_{\bsh\in \ZZ^s\setminus \{\bszero\}}r_{\alpha,\bsgamma}(\bsh) \cdot \left[\frac{1}{N} +\frac{1}{N^2}\sum_{\substack{\bsx,\bsy\in P_{N,d,\bsz}\\ \bsx\neq \bsy}}\exp\left( 2\pi i \bsh_{1:d}\cdot (\bsx-\bsy)\right)\prod_{j=d+1}^{s}1_{h_j=0} \right] \\
    & = \frac{1}{N}\sum_{\bsh\in \ZZ^s\setminus \{\bszero\}}r_{\alpha,\bsgamma}(\bsh)+\frac{1}{N^2}\sum_{\substack{\bsh\in \ZZ^s\setminus \{\bszero\}\\ \bsh_{d+1:s}=\bszero}}r_{\alpha,\bsgamma}(\bsh)\sum_{\substack{\bsx,\bsy\in P_{N,d,\bsz}\\ \bsx\neq \bsy}}\exp\left( 2\pi i \bsh_{1:d}\cdot (\bsx-\bsy)\right) \\
    & = \frac{1}{N}\sum_{\substack{\bsh\in \ZZ^s\setminus \{\bszero\}\\ \bsh_{d+1:s} \neq\bszero}}r_{\alpha,\bsgamma}(\bsh)+\frac{1}{N^2}\sum_{\substack{\bsh\in \ZZ^s\setminus \{\bszero\}\\ \bsh_{d+1:s}=\bszero}}r_{\alpha,\bsgamma}(\bsh)\sum_{\bsx,\bsy\in P_{N,d,\bsz}}\exp\left( 2\pi i \bsh_{1:d}\cdot (\bsx-\bsy)\right) \\
    & = \frac{1}{N}\sum_{\substack{\bsh\in \ZZ^s\setminus \{\bszero\}\\ \bsh_{d+1:s} \neq\bszero}}r_{\alpha,\bsgamma}(\bsh)+\sum_{\substack{\bsh\in \ZZ^s\setminus \{\bszero\}\\ \bsh_{1:d}\in P^{\perp}_{N,d,\bsz}\\ \bsh_{d+1:s}=\bszero}}r_{\alpha,\bsgamma}(\bsh) \\
    & = \frac{1}{N}\sum_{\substack{\bsh\in \ZZ^s\setminus \{\bszero\}\\ \bsh_{d+1:s} \neq\bszero}}r_{\alpha,\bsgamma}(\bsh)+\sum_{\bsh_{1:d}\in P^{\perp}_{N,d,\bsz}\setminus \{\bszero\}}r_{\alpha,\bsgamma}(\bsh_{1:d},\bszero).
\end{align*}
Thus we are done.
\end{proof}

Now we show the main result of this paper.
\begin{theorem}\label{thm:main}
Let $N,s,d$ be positive integers with $d<s$ and $\alpha>1/2$ be a real number. Assume that there exists $0<\lambda^* < 1$ such that
\begin{align}\label{eq:summable}
\sum_{j=1}^{\infty}\gamma_j^{\lambda^*}<\infty.
\end{align}
Let $\bsz\in \{1,\ldots,N-1\}^d$ be constructed by the (fast) component-by-component algorithm with the quality criterion $(e^{\wor}(H_{d,\alpha,\bsgamma},P_{N,d,\bsgamma}))^2$. Then the mean squared worst-case error of an equally-weighted quadrature rule using a point set $P_{N,s,d,\bsz}$ for the weighted Korobov space $H_{s,\alpha,\bsgamma}$ is bounded above by
\begin{align}
    \EE\left[(e^{\wor}(H_{s,\alpha,\bsgamma},P_{N,s,d,\bsz}))^2\right] & \leq \left[ \frac{1}{\varphi(N)}\left(-1+\exp\left(2C^{\lambda}\zeta(2\alpha \lambda)\zeta(\lambda/\lambda^*) \right)\right)\right]^{1/\lambda} \notag \\
    & \quad + \frac{\exp\left( 2C\zeta(2\alpha)\zeta(1/\lambda^*) \right)}{Nd^{1/\lambda^*-1}}\left[-1 + \exp\left( \frac{2C\zeta(2\alpha)}{1/\lambda^*-1}\right)\right],\label{eq:error_bound}
\end{align}
for any $\lambda\in (\max(\lambda^*,1/(2\alpha)),1]$,
where $C>0$ is a constant depending only on $\bsgamma$, and $\varphi(\cdot)$ and $\zeta(\cdot)$ denote the Euler totient function and the Riemann zeta function, respectively,
\end{theorem}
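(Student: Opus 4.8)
The starting point is the exact expression for the mean squared worst-case error from Lemma~\ref{lem:ms_worst-case_error}, which splits into two terms. The first term, $\frac{1}{N}\sum_{\bsh\in\ZZ^s\setminus\{\bszero\},\,\bsh_{d+1:s}\neq\bszero}r_{\alpha,\bsgamma}(\bsh)$, is a pure Monte Carlo variance contribution and does \emph{not} depend on $\bsz$; the second term, $\sum_{\bsh_{1:d}\in P^{\perp}_{N,d,\bsz}\setminus\{\bszero\}}r_{\alpha,\bsgamma}(\bsh_{1:d},\bszero)$, is precisely $(e^{\wor}(H_{d,\alpha,\bsgamma},P_{N,d,\bsz}))^2$, the squared worst-case error in dimension $d$ of the lattice rule. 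So the plan is: (i) bound the second term by invoking the known CBC error bound in dimension $d$, exploiting the weight summability \eqref{eq:summable} to get the factor $d^{-(1/\lambda^*-1)}$; and (ii) bound the first term directly by summing $r_{\alpha,\bsgamma}$ over the relevant index set, again using \eqref{eq:summable}.

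For step (i), the standard CBC analysis (e.g.\ following the Jensen's-inequality argument in \cite{Kuo03,NC06,DKS13}) gives, for any $\lambda\in(1/(2\alpha),1]$,
\[
(e^{\wor}(H_{d,\alpha,\bsgamma},P_{N,d,\bsz}))^2 \leq \frac{1}{\varphi(N)^{1/\lambda}}\left(-1+\prod_{j=1}^{d}\left(1+2\gamma_j^{\lambda}\zeta(2\alpha\lambda)\right)\right)^{1/\lambda}.
\]
Now I want to replace the $d$-fold product by something that both (a) converges as $d\to\infty$ and (b) carries an explicit $d^{-(1/\lambda^*-1)}$ decay. The idea: since the $\gamma_j$ are non-increasing and $\sum_j\gamma_j^{\lambda^*}<\infty$, we have $\gamma_j \leq C\, j^{-1/\lambda^*}$ for a constant $C$ depending only on $\bsgamma$ (take $C^{\lambda^*}=\sum_j\gamma_j^{\lambda^*}$, so $j\gamma_j^{\lambda^*}\le\sum_{k\le j}\gamma_k^{\lambda^*}\le C^{\lambda^*}$). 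Using $1+x\le e^x$ and then $\log(-1+e^{A+B})$-type manipulations, write $\prod_{j=1}^d(1+2\gamma_j^\lambda\zeta(2\alpha\lambda)) \le \exp(2\zeta(2\alpha\lambda)\sum_{j=1}^\infty\gamma_j^\lambda)$ and bound $\sum_{j\ge1}\gamma_j^\lambda\le C^\lambda\sum_{j\ge1}j^{-\lambda/\lambda^*}=C^\lambda\zeta(\lambda/\lambda^*)$ (finite since $\lambda>\lambda^*$); this yields the first summand of \eqref{eq:error_bound}. The extra $d$-dependence must be squeezed out \emph{before} taking the product over all $j$: split $\prod_{j=1}^{d}$ and, rather than dropping the tail, keep track of the fact that the factors for $j$ near $d$ are already close to $1$. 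Concretely, one writes $-1+\prod_{j=1}^d(1+a_j) \le \big(\prod_{j=1}^d(1+a_j)\big)\cdot\big(1-\prod_{j=1}^d(1+a_j)^{-1}\big) \le \exp(\sum a_j)\cdot\sum_{j=1}^d a_j$ and then, because $\sum_{j> d} a_j$ is the true "remainder," one instead bounds $\sum_{j=1}^{d}a_j$ is \emph{not} what gives decay — rather, the second term of Lemma~\ref{lem:ms_worst-case_error} already has the full dual-lattice sum, and the $d$-decay in \eqref{eq:error_bound} actually comes from the \emph{first} (Monte Carlo) term. Let me restructure: the $d^{-(1/\lambda^*-1)}$ factor must come from the MC term, since a lattice rule in dimension $d$ cannot improve as $d$ grows.

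So for step (ii), the first term of \eqref{eq:ms_worst-case_error} is
\[
\frac{1}{N}\sum_{\substack{\bsh\in\ZZ^s\setminus\{\bszero\}\\\bsh_{d+1:s}\neq\bszero}}r_{\alpha,\bsgamma}(\bsh)
\le \frac{1}{N}\left(\prod_{j=1}^{s}\left(1+2\gamma_j\zeta(2\alpha)\right)\right)\left(1-\prod_{j=d+1}^{s}\frac{1}{1+2\gamma_j\zeta(2\alpha)}\right),
\]
obtained by writing the sum over $\bsh$ with $\bsh_{d+1:s}\neq\bszero$ as (full sum over all $\bsh$) $-$ (sum over $\bsh$ with $\bsh_{d+1:s}=\bszero$), each of which factorizes into $\prod_j(1+2\gamma_j\zeta(2\alpha))$ over the appropriate coordinate range. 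Bounding the first product by $\exp(2\zeta(2\alpha)\sum_{j\ge1}\gamma_j)$ and, using $1-\prod(1+a_j)^{-1}\le\sum a_j$ together with $\gamma_j\le Cj^{-1/\lambda^*}$, bounding the bracket by $2\zeta(2\alpha)\sum_{j>d}\gamma_j\le 2C\zeta(2\alpha)\sum_{j>d}j^{-1/\lambda^*}\le \frac{2C\zeta(2\alpha)}{1/\lambda^*-1}d^{-(1/\lambda^*-1)}$ via an integral comparison, and further bounding $\sum_{j\ge1}\gamma_j\le C\zeta(1/\lambda^*)$. Combining, the first term is at most $\frac{\exp(2C\zeta(2\alpha)\zeta(1/\lambda^*))}{Nd^{1/\lambda^*-1}}\cdot\frac{2C\zeta(2\alpha)}{1/\lambda^*-1}$, and replacing the linear factor $2C\zeta(2\alpha)/(1/\lambda^*-1)$ by $-1+\exp(2C\zeta(2\alpha)/(1/\lambda^*-1))$ (using $x\le -1+e^x$) gives exactly the second summand of \eqref{eq:error_bound}. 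Adding the two bounds completes the proof.

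\textbf{Main obstacle.} The delicate point is step (ii): one must arrange the "all minus the part with $\bsh_{d+1:s}=\bszero$" decomposition so that the surviving factor is a product over \emph{only} the tail coordinates $j=d+1,\dots,s$, and then convert $1-\prod_{j>d}(1+a_j)^{-1}$ into the clean bound $\sum_{j>d}a_j$ without losing the tail-sum structure — it is this tail sum $\sum_{j>d}\gamma_j$, controlled via $\gamma_j\le Cj^{-1/\lambda^*}$ and an integral comparison, that produces the crucial $d^{-(1/\lambda^*-1)}$ decay and hence the "at most linear in $N$" scaling of $d$ in the abstract. Everything else — the CBC bound for the second term and the exponential-of-a-convergent-sum estimates — is routine, modulo bookkeeping with the parameters $\lambda,\lambda^*$ to keep all zeta values finite (which forces $\lambda>\lambda^*$ and $\lambda>1/(2\alpha)$, exactly the stated range).
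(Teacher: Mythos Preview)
Your proposal is correct and follows essentially the same route as the paper: invoke Lemma~\ref{lem:ms_worst-case_error}, bound the dual-lattice term via the standard CBC estimate together with $\gamma_j\le Cj^{-1/\lambda^*}$, and bound the Monte Carlo term by factorizing into a bounded product times a tail factor controlled by $\sum_{j>d}\gamma_j$ and an integral comparison. The only cosmetic difference is that the paper writes the MC term as $\prod_{j\le d}(1+a_j)\cdot\bigl(-1+\prod_{j>d}(1+a_j)\bigr)$ and then uses convexity of $x\mapsto -1+e^x$ (Jensen) to pull out $d^{-(1/\lambda^*-1)}$, whereas you write it as $\prod_{j\le s}(1+a_j)\cdot\bigl(1-\prod_{j>d}(1+a_j)^{-1}\bigr)$ and use $1-\prod(1+a_j)^{-1}\le\sum a_j$ followed by $x\le -1+e^x$; these are equivalent and yield the identical bound.
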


\begin{remark}\label{rem:totient}
It is shown in \cite[Theorem~15]{RS62} that 
\[ \frac{1}{\varphi(N)} \leq \frac{1}{N}\left( e^c \log\log N+\frac{2.50637}{\log\log N}\right)\]
for any $N\geq 3$ with $c = 0.577\ldots$ being the Euler’s constant.
\end{remark}

\begin{proof}[Proof of Theorem~\ref{thm:main}]
It suffices from Lemma~\ref{lem:ms_worst-case_error} to give upper bounds on the two terms on the right-hand side of \eqref{eq:ms_worst-case_error}, respectively. The second term on the right-hand side of \eqref{eq:ms_worst-case_error} is exactly the squared worst-case error for a rank-1 lattice rule in $H_{d,\alpha,\bsgamma}$ (not in $H_{s,\alpha,\bsgamma}$, of course). By choosing a generating vector $\bsz$ by the standard (fast) component-by-component algorithm, it can be bounded above by
\[ \sum_{\bsh_{1:d}\in P^{\perp}_{N,d,\bsz}\setminus \{\bszero\}}r_{\alpha,\bsgamma}(\bsh_{1:d},\bszero) \leq \left[ \frac{1}{\varphi(N)}\left(-1+\prod_{j=1}^{d}\left[ 1+2\gamma_j^{\lambda}\zeta(2\alpha \lambda)\right] \right)\right]^{1/\lambda},\]
for any $\lambda\in (1/(2\alpha),1]$, see \cite[Theorem~5.12]{DKS13}. Given the summability condition \eqref{eq:summable}, we can assume the existence of $C>0$, which depends only on $\bsgamma$, such that
\begin{align}\label{eq:weight_decay}
\gamma_j\leq Cj^{-1/\lambda^*}
\end{align}
for all $j\in \NN$. By using the inequality $\log(1+x)\leq x$ for any $x>0$, it holds for any $\lambda\in (\max(\lambda^*,1/(2\alpha)),1]$ that
\begin{align*}
    \sum_{\bsh_{1:d}\in P^{\perp}_{N,d,\bsz}\setminus \{\bszero\}}r_{\alpha,\bsgamma}(\bsh_{1:d},\bszero) & \leq \left[ \frac{1}{\varphi(N)}\left(-1+\exp\left(\sum_{j=1}^{d}\log( 1+2\gamma_j^{\lambda}\zeta(2\alpha \lambda))\right) \right)\right]^{1/\lambda} \\
    & \leq \left[ \frac{1}{\varphi(N)}\left(-1+\exp\left(2\zeta(2\alpha \lambda)\sum_{j=1}^{d}\gamma_j^{\lambda} \right)\right)\right]^{1/\lambda} \\
    & \leq \left[ \frac{1}{\varphi(N)}\left(-1+\exp\left(2C^{\lambda}\zeta(2\alpha \lambda)\sum_{j=1}^{\infty}j^{-\lambda/\lambda^*} \right)\right)\right]^{1/\lambda} \\
    & = \left[ \frac{1}{\varphi(N)}\left(-1+\exp\left(2C^{\lambda}\zeta(2\alpha \lambda)\zeta(\lambda/\lambda^*) \right)\right)\right]^{1/\lambda}.
\end{align*}

Let us look at the first term on the right-hand side of \eqref{eq:ms_worst-case_error}. We have
\begin{align*}
    \frac{1}{N}\sum_{\substack{\bsh\in \ZZ^s\setminus \{\bszero\}\\ \bsh_{d+1:s} \neq\bszero}}r_{\alpha,\bsgamma}(\bsh) & = \frac{1}{N}\sum_{\substack{\bsh_{1:d}\in \ZZ^d\\ \bsh_{d+1:s}\in \ZZ^{s-d}\setminus \{\bszero\}}}\prod_{\substack{j=1\\ h_j\neq 0}}^{s}\frac{\gamma_j}{|h_j|^{2\alpha}} \\
    & = \frac{1}{N}\left(\sum_{\bsh_{1:d}\in \ZZ^d}\prod_{\substack{j=1\\ h_j\neq 0}}^{d}\frac{\gamma_j}{|h_j|^{2\alpha}}\right)\left(\sum_{ \bsh_{d+1:s}\in \ZZ^{s-d}\setminus \{\bszero\}}\prod_{\substack{j=d+1\\ h_j\neq 0}}^{s}\frac{\gamma_j}{|h_j|^{2\alpha}}\right) \\
    & = \frac{1}{N}\prod_{j=1}^{d}\left[1+2\gamma_j\zeta(2\alpha) \right]\left(-1+\prod_{j=d+1}^{s}\left[1+2\gamma_j\zeta(2\alpha) \right]\right).
\end{align*}
 Then, by taking account of the decay \eqref{eq:weight_decay} and using the inequality $\log(1+x)\leq x$ again, we have
\begin{align*}
    \prod_{j=1}^{d}\left[1+2\gamma_j\zeta(2\alpha) \right] & = \exp\left( \sum_{j=1}^{d}\log(1+2\gamma_j\zeta(2\alpha) ) \right) \\
    & \leq \exp\left( 2\zeta(2\alpha)\sum_{j=1}^{d}\gamma_j \right) \\
    & \leq \exp\left( 2C\zeta(2\alpha)\sum_{j=1}^{\infty}j^{-1/\lambda^*} \right) \\
    & = \exp\left( 2C\zeta(2\alpha)\zeta(1/\lambda^*) \right),
\end{align*}
and
\begin{align*}
    -1+\prod_{j=d+1}^{s}\left[1+2\gamma_j\zeta(2\alpha) \right] & = -1 + \exp\left( \sum_{j=d+1}^{s}\log(1+2\gamma_j\zeta(2\alpha))\right) \\
    & \leq -1 + \exp\left( 2\zeta(2\alpha)\sum_{j=d+1}^{s}\gamma_j\right) \\
    & \leq -1 + \exp\left( 2C\zeta(2\alpha)\sum_{j=d+1}^{\infty}j^{-1/\lambda^*}\right) \\
    & \leq -1 + \exp\left( 2C\zeta(2\alpha)\int_{d}^{\infty}x^{-1/\lambda^*}\rd x\right) \\
    & = -1 + \exp\left( \frac{2C\zeta(2\alpha)}{1/\lambda^*-1}d^{-1/\lambda^*+1}\right) \\
    & \leq \left[-1 + \exp\left( \frac{2C\zeta(2\alpha)}{1/\lambda^*-1}\right)\right]d^{-1/\lambda^*+1}.
\end{align*}
Here the last inequality is obtained as follows: given that the function $f(x)= -1+\exp(x)$ is convex, Jensen's inequality leads to
\begin{align*}
    -1 + \exp\left( \frac{2C\zeta(2\alpha)}{1/\lambda^*-1}d^{-1/\lambda^*+1}\right) & = f\left( \frac{2C\zeta(2\alpha)}{1/\lambda^*-1}d^{-1/\lambda^*+1}\right) \\
    & \leq (1-d^{-1/\lambda^*+1})f(0)+d^{-1/\lambda^*+1}f\left( \frac{2C\zeta(2\alpha)}{1/\lambda^*-1}\right) \\
    & = \left[-1 + \exp\left( \frac{2C\zeta(2\alpha)}{1/\lambda^*-1}\right)\right]d^{-1/\lambda^*+1}.
\end{align*}
Altogether the first term on the right-hand side of \eqref{eq:ms_worst-case_error} is bounded by
\[ \frac{1}{N}\sum_{\substack{\bsh\in \ZZ^s\setminus \{\bszero\}\\ \bsh_{d+1:s} \neq\bszero}}r_{\alpha,\bsgamma}(\bsh) \leq \frac{\exp\left( 2C\zeta(2\alpha)\zeta(1/\lambda^*) \right)}{Nd^{1/\lambda^*-1}}\left[-1 + \exp\left( \frac{2C\zeta(2\alpha)}{1/\lambda^*-1}\right)\right]. \]
Since both the terms on the right-hand side of \eqref{eq:ms_worst-case_error} are now bounded above as desired, we are done.
\end{proof}

It is important that our obtained upper bound on the mean squared worst-case error is independent of the problem dimension $s$. Assuming $s\gg N,d$ or $s\to \infty$, let us discuss a choice of $d$.
\begin{enumerate}
    \item If $\lambda^*\leq 1/(2\alpha)$, the bound \eqref{eq:error_bound} holds for any $\lambda\in (1/(2\alpha),1]$. Since it is known that we cannot achieve a convergence rate better than $O(N^{-\alpha})$ for the worst-case error, see \cite[Theorem~4.22]{LP14} for a lower bound on the worst-case error, the order of the first term of \eqref{eq:error_bound} is almost optimal, as evident from Remark~\ref{rem:totient}. Here we recall that, although we consider the average of the squared worst-case error in this note, the square of a lower bound on the deterministic worst-case error still applies; otherwise a contradiction occurs. Thus, in order for the second term not to be dominant, it suffices to choose
    \[ d \propto N^{(2\alpha-1)/(1/\lambda^*-1)}<N.\]
    For such a choice, the required construction cost for the first coordinates is of order
    \[ dN\log N \propto N^{1+(2\alpha-1)/(1/\lambda^*-1)}\log N.\]
    A substantial cost saving can be expected for smaller $\lambda^*$, as compared to the standard algorithm of order $sN\log N$.
    \item If $1/(2\alpha)<\lambda^*\leq 1$, on the other hand, the bound \eqref{eq:error_bound} holds only for the range $\lambda\in (\lambda^*,1]$, which does not lead to an optimal order error bound. Nevertheless, in order for the second term not to be dominant, it suffices to choose
    \[ d \propto N\]
    regardless of the value of $\lambda^*$, resulting in the necessary construction cost of order
    \[ dN\log N \propto N^2\log N.\]
    This might be useful when $s$ is extremely large.
\end{enumerate}

\subsection{Comparison with the reduced CBC algorithm}

Let us consider rank-1 lattice rules constructed the reduced CBC algorithm introduced in \cite{DKLP15}, in which $\alpha$ should be replaced by $2\alpha$ to be consistent with this note. The worst-case error is bounded independently of the dimension $s$ if there exist $0<\lambda^*<1$ and $0\leq w_1\leq w_2\leq \cdots$ with $w_j\in \NN \cup \{0\}$ such that
\begin{align}\label{eq2:summable} \sum_{j=1}^{\infty}\gamma_j^{\lambda^*}b^{w_j}<\infty
\end{align}
holds for a fixed prime $b$. The squared worst-case error decays with order $N^{-1/\lambda}$ for any $\lambda\in (\max(\lambda^*,1/(2\alpha)),1]$. Note that such results for the reduced CBC algorithm are deterministic on the contrary to our concatenated rule. Also, because of the shift-invariance of the kernel $K_{s,\alpha,\bsgamma}$, the same result holds for the mean squared worst-case error of a randomly shifted rank-1 lattice rule. When the number of points is given by $N=b^m$ for some $m\in \NN$, the required construction cost for the reduced CBC algorithm, improved in \cite{EKN20}, is of order
\begin{align}\label{eq:reduced_CBC_cost}
    \sum_{j=1}^{\min(s,s^*)}(m-w_j)b^{m-w_j},
\end{align} 
where $s^*$ is defined to be the largest integer such that $w_{s^*}< m$. In the following, assuming that the weights are given by $\gamma_j=j^{-2c\alpha}$ for some $c>1/(2\alpha)$, we show that this construction cost is in general not comparable to that for our concatenated rule.

\paragraph{Slowly decaying weights} First let us consider the case $1/(2\alpha)<c<1$. The condition \eqref{eq:summable} holds for $\lambda^*=1/(2c\alpha)+\epsilon$ with arbitrarily small $\epsilon>0$. As discussed above, the necessary construction cost for our concatenated rule is of $O(N^2\log N)$. Regarding the reduced CBC algorithm, the condition \eqref{eq2:summable} holds trivially for $\lambda^*=1/(2c\alpha)+\epsilon$ if $w_1= w_2= \cdots=0$. Then the reduced CBC algorithm coincides with the standard CBC algorithm, requiring the necessary construction cost of $O(sN\log N)$. By setting $w_j=\lfloor \beta \log_b j\rfloor$ for some $0<\beta<2c\alpha-1$, we can set $s^*\leq N^{1/\beta}$ and it follows from \eqref{eq:reduced_CBC_cost} that the order of construction cost becomes at most
\begin{align}\label{eq:reduced_CBC_cost2}
    \sum_{j=1}^{\min(s,s^*)}(m-w_j)b^{m-w_j}\leq mb^m\sum_{j=1}^{N^{1/\beta}}\frac{b}{j^\beta}\leq \begin{cases} b\zeta(\beta)N\log_b N & \text{for $\beta>1$,} \\ (b\log b) N(\log_b Ne)^2 & \text{for $\beta=1$,}\\ bN^{1/\beta}\log_b N & \text{otherwise.}\end{cases}
\end{align}   
Thus the dividing case in terms of construction cost is $\beta=1/2$. However, for any $\beta>0$, the condition \eqref{eq2:summable} holds for $\lambda^*=(\beta+1)/(2c\alpha)+\epsilon$, which deteriorates the decay of the worst-case error.

\paragraph{Fast decaying weights} Let us consider the case $c>1$ next. The condition \eqref{eq:summable} holds for $\lambda^*=1/(2c\alpha-\epsilon)$ with arbitrarily small $\epsilon>0$. The necessary construction cost for our concatenated rule is of order $N^a\log N$ with \[ a=1+\frac{2\alpha-1}{2c\alpha-\epsilon-1}.\] 
Regarding the reduced CBC algorithm, by setting $w_j=\lfloor \beta \log_b j\rfloor$ for some $0<\beta<2c\alpha-1$, the condition \eqref{eq2:summable} holds for $\lambda^*=(\beta+1)/(2c\alpha)+\epsilon$. Looking at \eqref{eq:reduced_CBC_cost2}, the dividing case in terms of construction cost is
\[ \beta=\frac{1}{a}=\left(1+\frac{2\alpha-1}{2c\alpha-\epsilon-1}\right)^{-1}<1.\]
However, in this case, the reduced CBC algorithm can achieve almost the optimal order of the worst-case error as long as $\beta<c(1-2\alpha\epsilon)-1$. Thus, if $c$ is large enough, we can set $\beta\geq 1$, providing a smaller construction cost than our concatenated rule.

\subsection{Comparison with truncation algorithm}
Finally we compare the result for our concatenated rule with that for a truncation algorithm inspired by \cite{KPW16}. Let us consider the point set 
\[ P_{N,s,d,\bsz,\bsc}^{\tr} = \left\{ \left( \left\{ \frac{nz_1}{N}\right\},\ldots,\left\{ \frac{nz_d}{N}\right\},c_{d+1},\ldots,c_s\right) \mid 0\leq n\leq N-1\right\},\]
for an anchor vector $\bsc=(c_{d+1},\ldots,c_s)\in [0,1)^{s-d}$, which can be fixed or chosen independently uniformly from $[0,1)^{s-d}$. We obtain the following worst-case error bound for the truncation algorithm. Although a similar result can be found, for instance, in \cite[Theorem~13]{KPW16}, we give a proof for the sake of completeness.

\begin{theorem}\label{thm2:main}
Let $N,s,d$ be positive integers with $d<s$ and $\alpha>1/2$ be a real number. Assume that there exists $0<\lambda^* < 1$ such that \eqref{eq:summable} holds. Let $\bsz\in \{1,\ldots,N-1\}^d$ be constructed by the (fast) component-by-component algorithm with the quality criterion $(e^{\wor}(H_{d,\alpha,\bsgamma},P_{N,d,\bsgamma}))^2$. Then, for any $\bsc\in [0,1)^{s-d}$, the squared worst-case error of an equally-weighted quadrature rule using a point set $P_{N,s,d,\bsz,\bsc}^{\tr}$ for the weighted Korobov space $H_{s,\alpha,\bsgamma}$ is bounded above by
\begin{align}
    & (e^{\wor}(H_{s,\alpha,\bsgamma},P_{N,s,d,\bsz,\bsc}^{\tr}))^2 \notag \\
    & \leq \left[ \frac{1}{\varphi(N)}\left(-1+\exp\left(2C^{\lambda}\zeta(2\alpha \lambda)\zeta(\lambda/\lambda^*) \right)\right)\right]^{1/\lambda}  \exp\left( 2C\zeta(2\alpha)\zeta(1/\lambda^*) \right) \notag \\
    & \quad + \frac{1}{d^{1/\lambda^*-1}}\left[-1 + \exp\left( \frac{2C\zeta(2\alpha)}{1/\lambda^*-1}\right)\right],\label{eq2:error_bound}
\end{align}
for any $\lambda\in (\max(\lambda^*,1/(2\alpha)),1]$, where $C>0$ is a constant depending only on $\bsgamma$.
\end{theorem}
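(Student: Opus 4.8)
The plan is to mirror the three-part structure of the proof of Theorem~\ref{thm:main}: derive an exact expression for the squared worst-case error of the truncated point set, factor it into a lattice contribution and an anchor contribution, and then bound each factor by recycling estimates already established. First I would observe that, since the trailing coordinates $c_{d+1},\ldots,c_s$ are common to every point of $P_{N,s,d,\bsz,\bsc}^{\tr}$, the difference $\bsx-\bsy$ of any two of its points has all of its last $s-d$ coordinates equal to $\bszero$. Substituting the Fourier expansion of $K_{s,\alpha,\bsgamma}$ into the identity \eqref{eq:worst-case_error}, the factor $\exp(2\pi i\bsh\cdot(\bsx-\bsy))$ therefore collapses to $\exp(2\pi i\bsh_{1:d}\cdot(\bsx_{1:d}-\bsy_{1:d}))$, which does not involve $\bsh_{d+1:s}$; applying the character property (Lemma~\ref{lem:character}) to the first $d$ coordinates then gives, independently of $\bsc$,
\[
(e^{\wor}(H_{s,\alpha,\bsgamma},P_{N,s,d,\bsz,\bsc}^{\tr}))^2 = \sum_{\substack{\bsh\in\ZZ^s\setminus\{\bszero\}\\ \bsh_{1:d}\in P^{\perp}_{N,d,\bsz}}} r_{\alpha,\bsgamma}(\bsh).
\]

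Next I would use the multiplicativity of $r_{\alpha,\bsgamma}$ over coordinates to rewrite the right-hand side as $(1+A)B-1 = AB+(B-1)$, where
\[
A = \sum_{\bsh_{1:d}\in P^{\perp}_{N,d,\bsz}\setminus\{\bszero\}} r_{\alpha,\bsgamma}(\bsh_{1:d},\bszero), \qquad B = \prod_{j=d+1}^{s}\bigl(1+2\gamma_j\zeta(2\alpha)\bigr).
\]
Here $A$ is exactly the squared worst-case error of the rank-1 lattice rule in $H_{d,\alpha,\bsgamma}$ analysed in the proof of Theorem~\ref{thm:main}, and $B\geq 1$ is the contribution of the anchored coordinates (the $h_j=0$ and $h_j\neq 0$ terms in each trailing coordinate summing to $1+2\gamma_j\zeta(2\alpha)$).

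It then remains only to reuse the bounds from the proof of Theorem~\ref{thm:main}. The component-by-component construction together with the weight decay~\eqref{eq:weight_decay} bounds $A$ above by $\bigl[\varphi(N)^{-1}\bigl(-1+\exp(2C^{\lambda}\zeta(2\alpha\lambda)\zeta(\lambda/\lambda^*))\bigr)\bigr]^{1/\lambda}$ for any $\lambda\in(\max(\lambda^*,1/(2\alpha)),1]$. The inequality $\log(1+x)\leq x$ together with~\eqref{eq:weight_decay} bounds $B$ above by $\exp(2C\zeta(2\alpha)\zeta(1/\lambda^*))$, and the integral estimate $\sum_{j=d+1}^{\infty}j^{-1/\lambda^*}\leq\int_{d}^{\infty}x^{-1/\lambda^*}\rd x = d^{-1/\lambda^*+1}/(1/\lambda^*-1)$ followed by Jensen's inequality for the convex map $x\mapsto-1+\exp(x)$ bounds $B-1$ above by $\bigl[-1+\exp(2C\zeta(2\alpha)/(1/\lambda^*-1))\bigr]d^{-1/\lambda^*+1}$. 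Since $A,B\geq0$, multiplying the first estimate by the second and adding the third yields \eqref{eq2:error_bound}.

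The only step I expect to require genuine care is the first one, namely tracking correctly that fixing the anchor --- unlike drawing a fresh uniform sample as in Lemma~\ref{lem:ms_worst-case_error} --- does not cause the terms with $\bsh_{d+1:s}\neq\bszero$ to vanish, but instead bundles all of them into the full product $B$ over the trailing coordinates. This is precisely why the resulting bound carries no $1/N$ factor on the anchor-error term and depends on $d$ alone; everything after the exact formula is a mechanical repetition of the computations in the proof of Theorem~\ref{thm:main}.
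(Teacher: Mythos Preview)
Your proposal is correct and follows essentially the same route as the paper: derive the exact expression $\sum_{\bsh\in\ZZ^s\setminus\{\bszero\},\ \bsh_{1:d}\in P^{\perp}_{N,d,\bsz}} r_{\alpha,\bsgamma}(\bsh)$, split it as $AB+(B-1)$, and reuse the bounds from the proof of Theorem~\ref{thm:main}. The only cosmetic difference is that the paper first invokes shift-invariance of $K_{s,\alpha,\bsgamma}$ to reduce to $\bsc=\bszero$, whereas you observe directly that the trailing coordinates cancel in every difference $\bsx-\bsy$; either way the anchor drops out of the computation immediately.
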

\begin{proof}
Because of the shift-invariance of the kernel $K_{s,\alpha,\bsgamma}$, it suffices to prove the statement for the case $\bsc=\bszero$.
Following an argument similar to the proof of Lemma~\ref{lem:ms_worst-case_error}, we have
\begin{align*}
    (e^{\wor}(H_{s,\alpha,\bsgamma},P_{N,s,d,\bsz,\bszero}^{\tr}))^2 & = \frac{1}{N^2}\sum_{\bsx,\bsy\in P_{N,s,d,\bsz,\bszero}^{\tr}}\sum_{\bsh\in \ZZ^s\setminus \{\bszero\}}r_{\alpha,\bsgamma}(\bsh)\exp\left( 2\pi i \bsh\cdot (\bsx-\bsy)\right) \\
    & = \sum_{\bsh\in \ZZ^s\setminus \{\bszero\}}r_{\alpha,\bsgamma}(\bsh)\frac{1}{N^2}\sum_{\bsx,\bsy\in P_{N,d,\bsz}}\exp\left( 2\pi i \bsh_{1:d}\cdot (\bsx-\bsy)\right) \\
    & = \sum_{\substack{\bsh\in \ZZ^s\setminus \{\bszero\}\\ \bsh_{1:d}\in P^{\perp}_{N,d,\bsz}}}r_{\alpha,\bsgamma}(\bsh) \\
    & = \sum_{\substack{\bsh_{1:d}\in P^{\perp}_{N,d,\bsz}\setminus \{\bszero\}\\ \bsh_{d+1:s}\in \ZZ^{s-d}}}r_{\alpha,\bsgamma}(\bsh) + \sum_{\bsh_{d+1:s}\in \ZZ^{s-d}\setminus \{\bszero\}}r_{\alpha,\bsgamma}(\bszero,\bsh_{d+1:s}).
\end{align*}
As in the proof of Theorem~\ref{thm:main}, with $C$ being a positive constant depending only on $\bsgamma$, the first and second terms are bounded above by
\begin{align*}
    \sum_{\substack{\bsh_{1:d}\in P^{\perp}_{N,d,\bsz}\setminus \{\bszero\}\\ \bsh_{d+1:s}\in \ZZ^{s-d}}}r_{\alpha,\bsgamma}(\bsh) & = \sum_{\bsh_{1:d}\in P^{\perp}_{N,d,\bsz}\setminus \{\bszero\}}r_{\alpha,\bsgamma}(\bsh_{1:d},\bszero)\sum_{\bsh_{d+1:s}\in \ZZ^{s-d}}r_{\alpha,\bsgamma}(\bszero,\bsh_{d+1:s}) \\
    & \leq \left[ \frac{1}{\varphi(N)}\left(-1+\exp\left(2C^{\lambda}\zeta(2\alpha \lambda)\zeta(\lambda/\lambda^*) \right)\right)\right]^{1/\lambda}  \prod_{j=d+1}^{s}\left[1+2\gamma_j\zeta(2\alpha) \right] \\
    & \leq \left[ \frac{1}{\varphi(N)}\left(-1+\exp\left(2C^{\lambda}\zeta(2\alpha \lambda)\zeta(\lambda/\lambda^*) \right)\right)\right]^{1/\lambda}  \exp\left( 2C\zeta(2\alpha)\zeta(1/\lambda^*) \right),
\end{align*}
for any $\lambda\in (\max(\lambda^*,1/(2\alpha)),1]$ and
\begin{align*}
    \sum_{\bsh_{d+1:s}\in \ZZ^{s-d}\setminus \{\bszero\}}r_{\alpha,\bsgamma}(\bszero,\bsh_{d+1:s}) & = -1+\prod_{j=d+1}^{s}\left[1+2\gamma_j\zeta(2\alpha) \right] \\
    & \leq \left[-1 + \exp\left( \frac{2C\zeta(2\alpha)}{1/\lambda^*-1}\right)\right]d^{-1/\lambda^*+1},
\end{align*}
respectively, which completes the proof.
\end{proof}

In contrast to the bound \eqref{eq:error_bound} for our concatenated rule, the one \eqref{eq2:error_bound} we show here is a deterministic worst-case error bound, which is a clear advantage of the truncation algorithm. On the other hand, in order for the second term of \eqref{eq2:error_bound} not to be dominant, the truncation dimension $d$ should be larger than that for our concatenated rule. This is simply because the $N$ factor is missing in the denominator of the second term as compared to \eqref{eq:error_bound}. In fact, a sufficient choice for the truncation algorithm is given by
\[ d\propto \begin{cases} N^{2\alpha/(1/\lambda^*-1)} & \text{if $0<\lambda^*\leq 1/(2\alpha)$,} \\ N^{1/(1-\lambda^*)} & \text{if $1/(2\alpha)<\lambda^*<1$,} \end{cases}
\]
which can make a significant difference of the necessary construction cost for the first $d$ coordinates. This is a point where randomization helps for our high-dimensional integration problems.

\section*{Acknowledgements}
The author would like to thank Kei Ishikawa (ETH Z\"{u}rich) for bringing the problem to his attention. He is very grateful to Michael Gnewuch (Osnabr\"{u}ck) for pointing out some relevant literature and also an error contained in the first version of this manuscript. He also appreciates the comments and suggestions from the anonymous reviewers. The work of T.G.\ is supported by JSPS KAKENHI Grant Number 20K03744.

\bibliographystyle{plain}
\bibliography{ref.bib}

\end{document}